\newtheorem{theorem}{Theorem}[section]
\newtheorem{proposition}[theorem]{Proposition}
\theoremstyle{definition}
\numberwithin{equation}{section}
\newcommand*\re{\mathbb{R}}
\newcommand*\rn{\mathbb{R}^n}
\newcommand*\omegal{\Omega_\ell}
\begin{document}
	
	\title[General p-Laplacian eigenvalue problem]{ON THE ASYMPTOTIC BEHAVIOR OF THE EIGENVALUES OF NONLINEAR ELLIPTIC PROBLEMS IN DOMAINS BECOMING UNBOUNDED}

	\maketitle
	\centerline{\scshape Luca Esposito$^1$, Prosenjit Roy$^2$ and Firoj Sk$^2$}
	
	\medskip 
	{\footnotesize
		
		\centerline{1 DipMat, University of Salerno, Italy.}
		\centerline{E-mail: luesposi@unisa.it}
		\centerline{2 Indian Institute of Technology,  Kanpur, India.}
		\centerline{E-mail: prosenjit@iitk.ac.in and firoj@iitk.ac.in}
		
	}

	\begin{abstract}
		We analyze the asymptotic behavior of the eigenvalues of nonlinear elliptic problems under Dirichlet boundary conditions and mixed (Dirichlet, Neumann) boundary conditions on domains becoming unbounded. We make intensive use of Picone identity to overcome nonlinearity complications. Altogether the use of Picone identity makes the proof easier  with respect to the known proof in the linear case. Surprisingly the asymptotic behavior under mixed boundary conditions critically differs from the case of pure Dirichlet boundary conditions for some class of problems.
	\end{abstract}
     
     \smallskip
     
     \subjclass{\textit{Subject Classification:}\ {35P15; 35P30.}}
	\vspace{.5cm}
	\section{Introduction}\label{section:introduction}
	In this paper we study nonlinear elliptic eigenvalue problems on domains which become unbounded  in one or several directions. We have basically focused on operators related to the p-Laplacian. To be more precise let us introduce some notations that we will use in the rest of the paper. \smallskip
	
	Let $1\leq m< n$ and let 
	$\omega_1$, $\omega_2$ be two open bounded sets in $\mathbb R^m$ and $\mathbb R^{n-m}$ respectively. For every $\ell>0$ let us define $\Omega_\ell$:=$\ell\omega_1\times \omega_2$. We will denote, for every $x\in \re^n$
	$$
	x=(X_1,X_2),
	$$
	with
	$$
	X_1=(x_1,\dots, x_m), \hspace{5mm} X_2=(x_{m+1},\dots, x_n\\
	).
	$$
	$\nabla, \nabla_{X_1}$ and $\nabla_{X_2}$ will denote gradient vectors in $\mathbb{R}^n, \mathbb{R}^m$ and $\mathbb{R}^{n-m}$ respectively. Let $A=A(X_1,X_2)$ be an $n\times n$-symmetric matrix of the type 
	\[
	A=
	\begin{pmatrix}
	A_{11}(x)     & A_{12}(X_2)\\
	A_{12}^t(X_2) & A_{22}(X_2)
	\end{pmatrix}
	,\]
	where $A_{22}$ is an $(n-m)\times(n-m)$ matrix.
	We will assume that $A$ is an uniformly bounded and uniformly positive definite matrix on $\mathbb R^m\times\omega_2$. Precise conditions on the matrix $A$ will be clarified in section \ref{dirichlet}. We start considering the following eigenvalue problem with Dirichlet boundary condition for any $p\geq 2$,

	\begin{equation}\label{dirichlet case}
	\begin{cases}
	-\text{div}\big(|A(x)\nabla u_\ell\cdot\nabla u_\ell|^\frac{p-2}{2}A(x)\nabla u_\ell\big)=\lambda_D(\omegal)|u_\ell|^{p-2}u_\ell\;\;\;\text{  in }\Omega_\ell,\\ u_\ell=0\;\;\;\text{ on }\partial\Omega_\ell.
	\end{cases}
	\end{equation}
	We are interested in the study of the asymptotic behaviour of the first eigenvalue $\lambda_D^1(\Omega_\ell)$ of the above problem as $\ell\rightarrow \infty$.  
	In the linear case $p=2$, in a seminal paper of Chipot and Rougirel (see \cite{arn}), it was proved that the k-th eigenvalue of \eqref{dirichlet case}(see \cite{arn} for the definition of k-th eigenvalue) converges to the first eigenvalue of the corresponding cross section problem that we now introduce in the general case $p\geq 2$,

	\begin{equation}\label{dirichlet cross section}
	\begin{cases}
	-\text{div}\big(|A_{22}(X_2)\nabla u\cdot\nabla u|^\frac{p-2}{2}A_{22}(X_2)\nabla u\big)=\mu(\omega_2)|u|^{p-2}u\;\;\;\text{  in }\omega_2, \\ u=0\;\;\;\text{ on }\partial\omega_2.
	\end{cases}
	\end{equation}
	We will denote by $\mu_1(\omega_2)$ and $W$ respectively the first eigenvalue and the first normalized ($ ||W||_{L^p(\omega_2)} =1$)  eigenfunction of \eqref{dirichlet cross section}.    In the first part of the paper we are able to prove that also in the nonlinear case $p>2$ the first eigenvalue of \eqref{dirichlet case} converge to the  first eigenvalue of the problem (\ref{dirichlet cross section}) on cross section.  Moreover, we would like to introduce the following minimization problem on the infinite strip $\Omega_\infty:= \mathbb{R}^m \times \omega_2  $ in order to get a deeper insight of the asymptotic behaviour of $\lambda^1_D(\Omega_\ell)$:
	\begin{equation*}
	\label{limitinf}
	\Lambda_\infty = \inf_{\substack{u \in W_0^{1,p}(\Omega_\infty),\\ u\neq 0}} \frac{\int_{\Omega_\infty} |A\nabla u\cdot \nabla u|^{\frac{p}{2}} }{\int_{\Omega_\infty}|u|^p}.
	\end{equation*}
	
	Precisely we prove the following theorem.
	
	\begin{theorem}\label{conv for dirichlet case}
		Let $p\geq 2$ and $\mu_1(\omega_2)$ denote  the first eigenvalue of (\ref{dirichlet cross section}), then there exists a constant $C$ depending only on $A,\omega_2, p$, such that
		$$
		\mu_1(\omega_2)\leq\lambda^1_D(\omegal)\leq\mu_1(\omega_2)+\frac{C}{\ell},
		$$
		for every $\ell >0$. Moreover $\Lambda_\infty = \mu_1(\omega_2).$
	\end{theorem}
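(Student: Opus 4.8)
The plan is to exploit the variational characterizations
\[
\lambda_D^1(\omegal)=\inf_{0\neq u\in W_0^{1,p}(\omegal)}\frac{\int_{\omegal}(A\grad u\cdot\grad u)^{p/2}}{\int_{\omegal}|u|^p},\qquad
\Lambda_\infty=\inf_{0\neq u\in W_0^{1,p}(\Omega_\infty)}\frac{\int_{\Omega_\infty}(A\grad u\cdot\grad u)^{p/2}}{\int_{\Omega_\infty}|u|^p},
\]
(the operator in \eqref{dirichlet case} being the Euler--Lagrange operator of $\frac1p\int(A\grad u\cdot\grad u)^{p/2}$), together with the anisotropic Picone inequality
$(A\grad u\cdot\grad u)^{p/2}\ge (A\grad v\cdot\grad v)^{(p-2)/2}A\grad v\cdot\grad(u^p/v^{p-1})$, valid for $u\ge0$, $v>0$. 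Throughout, $W$ is the first positive $L^p(\omega_2)$-normalized eigenfunction of \eqref{dirichlet cross section}, viewed also as a function on $\omegal$ (and on $\Omega_\infty$) that is constant in $X_1$. Since $\grad_{X_1}W\equiv0$, one has $A\grad W\cdot\grad W=A_{22}\grad_{X_2}W\cdot\grad_{X_2}W=:a(X_2)$ and, for any test function $\varphi$, $A\grad W\cdot\grad\varphi=b(X_2)\cdot\grad_{X_1}\varphi+A_{22}\grad_{X_2}W\cdot\grad_{X_2}\varphi$ with $b:=A_{12}\grad_{X_2}W$ depending on $X_2$ only; the coefficient $A_{12}$ appearing only through $b(X_2)$ is exactly what makes the argument work, and is the point where the structural hypotheses on $A$ from Section~\ref{dirichlet} enter.

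For the lower bound $\mu_1(\omega_2)\le\lambda_D^1(\omegal)$ I take any $u\in W_0^{1,p}(\omegal)$, assume $u\ge0$ (replace $u$ by $|u|$), and apply Picone with $v=W$:
$\int_{\omegal}(A\grad u\cdot\grad u)^{p/2}\ge\int_{\omegal}a^{(p-2)/2}\big(b\cdot\grad_{X_1}(u^p/W^{p-1})+A_{22}\grad_{X_2}W\cdot\grad_{X_2}(u^p/W^{p-1})\big)$.
The $b$-term vanishes: for a.e.\ fixed $X_2\in\omega_2$ the function $X_1\mapsto u(X_1,X_2)^p/W(X_2)^{p-1}$ lies in $W_0^{1,p}(\ell\omega_1)$, so $\int_{\ell\omega_1}\grad_{X_1}(u^p/W^{p-1})\,dX_1=0$ while $a^{(p-2)/2}b$ depends only on $X_2$. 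The $A_{22}$-term, integrated first in $X_2$ for fixed $X_1$, is precisely the weak formulation of \eqref{dirichlet cross section} tested against $u(X_1,\cdot)^p/W^{p-1}\in W_0^{1,p}(\omega_2)$, hence equals $\mu_1(\omega_2)\int_{\omega_2}u(X_1,X_2)^p\,dX_2$; integrating in $X_1$ gives $\int_{\omegal}(A\grad u\cdot\grad u)^{p/2}\ge\mu_1(\omega_2)\int_{\omegal}u^p$, and taking the infimum proves the claim. The only delicate point — admissibility of $u^p/W^{p-1}$ up to $\partial\omega_2$, where $u$ and $W$ both vanish — is handled in the usual way by replacing $W$ with $W+\varepsilon$ and letting $\varepsilon\to0$ (or by invoking a generalized Picone inequality).

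For the upper bound I test the Rayleigh quotient with $u_\ell(x)=\varphi_\ell(X_1)W(X_2)$, where $\varphi_\ell(X_1)=\psi(X_1/\ell)$ for a fixed Lipschitz $\psi\in W_0^{1,p}(\omega_1)$ with $0\le\psi\le1$ and $\psi>0$ in $\omega_1$, so that $|\grad_{X_1}\varphi_\ell|=O(1/\ell)$ and $\int_{\ell\omega_1}\varphi_\ell^p=\ell^m\int_{\omega_1}\psi^p$. Then $A\grad u_\ell\cdot\grad u_\ell=\varphi_\ell^2 a+2W\varphi_\ell\, b\cdot\grad_{X_1}\varphi_\ell+W^2 A_{11}\grad_{X_1}\varphi_\ell\cdot\grad_{X_1}\varphi_\ell$. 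Raising to the power $p/2$ and Taylor-expanding about $\varphi_\ell^2 a$: the main term integrates to $\mu_1(\omega_2)\int_{\ell\omega_1}\varphi_\ell^p$, using $\int_{\omega_2}a^{p/2}=\mu_1(\omega_2)$ and $\|W\|_{L^p(\omega_2)}=1$; the first-order term proportional to $b\cdot\grad_{X_1}\varphi_\ell$ integrates to zero, because $a^{(p-2)/2}\varphi_\ell^{p-2}\cdot 2W\varphi_\ell\, b\cdot\grad_{X_1}\varphi_\ell$ is a constant-in-$X_1$ vector dotted into $\tfrac1p\grad_{X_1}(\varphi_\ell^p)$ and $\varphi_\ell^p\in W_0^{1,1}(\ell\omega_1)$; every remaining contribution is pointwise $O(1/\ell)$ times bounded data (here $p\ge2$ and the boundedness of $W$ and $\grad W$, i.e.\ the regularity of the cross-section eigenfunction, are used), hence contributes $O(\ell^{m-1})$. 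Dividing by $\int_{\omegal}u_\ell^p=\int_{\ell\omega_1}\varphi_\ell^p\asymp\ell^m$ yields $\lambda_D^1(\omegal)\le\mu_1(\omega_2)+C/\ell$.

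Finally, $\Lambda_\infty=\mu_1(\omega_2)$ follows with almost no extra work: extending by zero gives $W_0^{1,p}(\omegal)\subset W_0^{1,p}(\Omega_\infty)$, hence $\Lambda_\infty\le\lambda_D^1(\omegal)\le\mu_1(\omega_2)+C/\ell$ for every $\ell$, and letting $\ell\to\infty$ gives $\Lambda_\infty\le\mu_1(\omega_2)$; conversely, repeating the Picone computation of the second paragraph for $u\in C_c^\infty(\Omega_\infty)$ (compact support in $X_1$ again kills the $b$-term after integration over $\mathbb R^m$) gives $\int_{\Omega_\infty}(A\grad u\cdot\grad u)^{p/2}\ge\mu_1(\omega_2)\int_{\Omega_\infty}|u|^p$, and density of $C_c^\infty(\Omega_\infty)$ in $W_0^{1,p}(\Omega_\infty)$ yields $\Lambda_\infty\ge\mu_1(\omega_2)$. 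I expect the genuine obstacles to be the two technical ones already flagged — justifying $u^p/W^{p-1}$ (resp.\ the trial function $u_\ell$) near $\partial\omega_2$, which rests on the strict positivity and boundary regularity of $W$, and checking that each error term in the trial-function estimate is uniformly $O(\ell^{-1})$ — while the conceptual core, namely the vanishing of the $A_{12}$ cross-terms after integration in $X_1$, is short and is precisely where Picone replaces the harder arguments needed in the linear case.
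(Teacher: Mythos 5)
Your proposal is correct and follows essentially the same strategy as the paper: Picone's inequality with $v=W$ for the lower bound, a cutoff-times-cross-section-eigenfunction test function $v_\ell(X_1)W(X_2)$ for the upper bound, and the two-sided sandwich for $\Lambda_\infty$. The differences are organizational rather than conceptual — you obtain the lower bound by slicing and Fubini (integrating out $X_1$ to kill the $A_{12}$-term and then testing the cross-section weak formulation in $X_2$), where the paper instead observes that the extended $W$ solves the $n$-dimensional equation on $\Omega_\infty$ and integrates $R(u,W)$ by parts over $\Omega_\ell$ directly; and you estimate the test-function Rayleigh quotient by a Taylor expansion of $t\mapsto t^{p/2}$ (exploiting that the first-order $A_{12}$ contribution integrates to zero), where the paper uses Minkowski's inequality in $L^{p/2}$ and bounds the cross term crudely — both variants yield the same $O(1/\ell)$ rate, with your Taylor remainder requiring the standard H\"older-type estimate $|R|\lesssim |h|^{p/2}$ when $2\le p<4$.
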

	
	The lower bound in Theorem \ref{dirichlet case} was proved by Chipot and Rougirel (see \cite{arn}), in the linear case $p=2$, using an approximation argument for the matrix $A$ that relies on the linearity of the  equation and it is not clear if the same argument can be employed in the non linear case. In this paper we present a complete different argument  that relies  on a clever use of  Picone identity (see Theorem \ref{picone}). 
	In spite of the difficulty of non linearity our  approach turns out  to be  simpler and shorter than \cite{arn}. The upper bound in Theorem \ref{dirichlet case} is obtained, in a similar manner as in (\cite{arn}), by constructing a suitable test function on the truncated domains $\Omega_{\frac{\ell}{2}}=\frac{\ell}{2}\omega_1\times\omega_2$ and then letting $\ell$ tending to infinity.
	\smallskip
	
	The second part of the paper concerns the  eigenvalue problem for  the same operator  as in  \eqref{dirichlet case}, but with  mixed boundary conditions. For technical reasons (which precisely the construction of test function ``$\phi_\ell$" in the proof of Theorem \ref{mixed case ell to infinity} ), we can only allow the domain $\Omega_\ell$ to become unbounded in  one direction, i.e. we assume that $\omega_1 = (-1,1)$ and $A_{11}(x) = a_{11}(X_2)$. Namely we consider the following eigenvalue problem on  $\omegal=(-\ell,\ell)\times\omega_2$:
	\begin{equation}\label{mixed eigenvalue}
	\begin{cases}
	-\text{div}\big(|A(X_2)\nabla u_\ell\cdot\nabla u_\ell|^\frac{p-2}{2}A(X_2)\nabla u_\ell\big)=\lambda_M(\omegal)|u_\ell|^{p-2}u_\ell\;\;\;\text{  in }\omegal,\\ u_\ell=0\;\;\;\text{ on }\gamma_\ell:= (-\ell,\ell )\times \partial\omega_2,\\
	(A(X_2)\nabla u_\ell)\cdot\nu=0\;\;\;\text{ on }\Gamma_\ell:= \{ -\ell,\ell\}\times \omega_2,
	\end{cases}
	\end{equation}
	where $\nu$ denotes the outward unit normal to $\Gamma_\ell$.  For the case $p=2$, in Chipot, Roy and Shafrir \cite{crs} it was proved that when $\ell$ goes to plus infinity the limit of the first eigenvalue $\lambda_M^1(\omegal)$ exists.   In addition this limit is  strictly smaller than $\mu_1(\omega_2)$ if and only if   $A_{12}\cdot\nabla_{X_2} W\neq 0$ a.e. on $\omega_2$.  In the nonlinear case $( p \geq 2)$, we prove that this gap phenomenon still holds under the same condition  $A_{12}\cdot\nabla_{X_2} W\neq 0$ a.e. on $\omega_2$. In particular we prove  the following theorem. 
	\begin{theorem}\label{mixed case ell to infinity}
		For $p\geq 2,$
		we have 
		$$
		\lim_{\ell\to\infty}\sup\lambda^1_M(\omegal)<\mu_1(\omega_2),
		$$
		provided $A_{12}\cdot\nabla_{X_2} W\neq 0$ a.e. on $\omega_2$, otherwise $\lambda^1_M(\omegal)=\mu_1(\omega_2)$ for all $\ell>0.$
	\end{theorem}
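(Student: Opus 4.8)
The plan is to establish two things: first, the "otherwise" direction (when $A_{12}\cdot\nabla_{X_2}W=0$ a.e.\ on $\omega_2$, then $\lambda^1_M(\omega_\ell)=\mu_1(\omega_2)$ for every $\ell$); second, the gap statement $\limsup_{\ell\to\infty}\lambda^1_M(\omega_\ell)<\mu_1(\omega_2)$ under the hypothesis $A_{12}\cdot\nabla_{X_2}W\neq0$ a.e. For the first (easy) part, I would use $W=W(X_2)$, viewed as a function on $\omega_\ell$ independent of $x_1$, as a test function in the variational characterization of $\lambda^1_M(\omega_\ell)$. Because $\nabla_{x_1}W=0$, one has $A(X_2)\nabla W\cdot\nabla W = A_{22}\nabla_{X_2}W\cdot\nabla_{X_2}W + 2(A_{12}\cdot\nabla_{X_2}W)\,\partial_{x_1}W + \dots$; the cross terms involve $A_{12}\cdot\nabla_{X_2}W$, which vanishes, so the Rayleigh quotient of $W$ on $\omega_\ell$ equals exactly $\mu_1(\omega_2)$ (the $x_1$-integrals cancel in numerator and denominator). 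This gives $\lambda^1_M(\omega_\ell)\le\mu_1(\omega_2)$; the reverse inequality $\lambda^1_M(\omega_\ell)\ge\mu_1(\omega_2)$ should follow from a slicing/Fubini argument together with the fact that for a.e.\ slice $\{x_1\}\times\omega_2$ the partial Rayleigh quotient is bounded below by $\mu_1(\omega_2)$ — or alternatively from Picone's identity (Theorem~\ref{picone}) exactly as in the Dirichlet lower bound of Theorem~\ref{conv for dirichlet case}, since $W$ vanishes on $\gamma_\ell$ and the Neumann part $\Gamma_\ell$ produces no boundary contribution.

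For the gap statement I would construct an explicit competitor $\phi_\ell\in W^{1,p}(\omega_\ell)$ vanishing on $\gamma_\ell$ whose Rayleigh quotient is strictly below $\mu_1(\omega_2)$ uniformly in $\ell$ for $\ell$ large. The natural ansatz, mimicking \cite{crs}, is $\phi_\ell(x_1,X_2)=W(X_2)+\psi_\ell(x_1,X_2)$, where $\psi_\ell$ is a correction that is odd (or otherwise cleverly shaped) in $x_1$, localized near the Neumann ends $x_1=\pm\ell$ (or spread out — the choice is exactly the "technical" point flagged in the introduction), and built so that the first-order variation of the energy in the direction $\psi_\ell$ is negative. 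Concretely, expanding the $p$-energy of $W+t\psi$ to first order in $t$ and using the Euler--Lagrange equation for $W$, the linear term reduces to a boundary-type integral proportional to $\int (A_{12}\cdot\nabla_{X_2}W)\,\partial_{x_1}\psi$ (plus lower-order interior terms that the equation kills). If $\psi_\ell$ is chosen with $\partial_{x_1}\psi_\ell$ of one sign correlated with the sign of $A_{12}\cdot\nabla_{X_2}W$, this linear term is of order (length of support in $x_1$), while the quadratic correction to both numerator and denominator is of higher order in $t$ or confined to the same support; optimizing in $t$ yields a strictly negative net change, hence $\lambda^1_M(\omega_\ell)\le\mu_1(\omega_2)-\delta$ for some $\delta>0$ independent of $\ell$.

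The main obstacle is precisely the construction of $\phi_\ell$ and the book-keeping of the nonlinear energy expansion: unlike the linear case, the functional $u\mapsto\int_{\omega_\ell}|A\nabla u\cdot\nabla u|^{p/2}$ is not quadratic, so "first-order variation" must be justified via convexity/$C^1$ differentiability of $t\mapsto\int|A\nabla(W+t\psi)\cdot\nabla(W+t\psi)|^{p/2}$, and one must control the remainder uniformly. Two sub-points need care: (i) the integrand $A\nabla W\cdot\nabla W$ may vanish on a set of positive measure (if $W$ has critical points), so the map is only $C^1$, not $C^2$, and the expansion of $|z|^{p/2}$ near $z=0$ must be handled with the elementary inequality $\big||a+b|^{p/2}-|a|^{p/2}\big|\le C(|a|^{(p-2)/2}|b|+|b|^{p/2})$ for $p\ge2$; (ii) the denominator $\int|W+t\psi|^p$ must be expanded to the same order, again using $p\ge2$ convexity estimates, and one checks that its first-order term is $p t\int|W|^{p-2}W\psi$, which by a judicious choice (e.g.\ $\psi_\ell$ with $\int_{\omega_2}|W|^{p-2}W\psi_\ell\,dX_2=0$ on each slice, achievable by subtracting a multiple of $W$) can be made to vanish or be negligible, so the sign of the Rayleigh-quotient change is dictated by the numerator's linear term. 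Once the test function is in hand, taking $\limsup$ as $\ell\to\infty$ is immediate. The restriction to $\omega_1=(-1,1)$ and $A_{11}=a_{11}(X_2)$ enters exactly here: it is what allows $\psi_\ell$ to depend on $x_1$ in a simple one-dimensional way while keeping $A(X_2)\nabla\phi_\ell\cdot\nabla\phi_\ell$ computable, and it is why the theorem is only stated for one unbounded direction.
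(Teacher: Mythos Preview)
Your treatment of the ``otherwise'' case is fine: using $W(X_2)$ as competitor gives the upper bound, and the Picone argument you sketch (with the observation that $A\nabla W\cdot\nu=\pm A_{12}\cdot\nabla_{X_2}W=0$ on $\Gamma_\ell$, so no Neumann boundary term appears) gives the lower bound. This is close to the paper's Case~2.

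The gap statement, however, has a genuine problem. Your ansatz $\phi_\ell=W+t\psi_\ell$ with $\psi_\ell$ either localized near $x_1=\pm\ell$ or spread out cannot produce a gap \emph{independent of $\ell$}. The reason is a scaling mismatch. Compute carefully: with $D(W)=2\ell$ one has
\[
R'(W)[\psi_\ell]=\frac{p}{2\ell}\int_{\Omega_\ell}|A_{22}\nabla_{X_2}W\cdot\nabla_{X_2}W|^{\frac{p-2}{2}}(A_{12}\cdot\nabla_{X_2}W)\,\partial_{x_1}\psi_\ell
\]
(note the weight $|A_{22}\nabla W\cdot\nabla W|^{(p-2)/2}$ you omitted). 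If $\psi_\ell$ is supported in a strip of fixed width near the ends, the integral is $O(1)$ and the linear term is $O(1/\ell)$; the second-order remainder is likewise $O(t^2/\ell)$, so optimizing in $t$ gives a gap $O(1/\ell)\to 0$. If instead $\psi_\ell=x_1 g(X_2)$ (``spread out''), the linear term is $O(1)$ but the remainder picks up $\int x_1^2\,|\nabla_{X_2}g|^p\sim\ell^3$, hence a second-order contribution $O(t^2\ell^2)$ to $R$; optimizing gives a gap $O(1/\ell^2)$. More generally, any competitor of the form $W+\psi_\ell$ with $\psi_\ell$ small or localized leaves $\phi_\ell\approx W$ on a region of length $\sim\ell$, and there the Rayleigh density equals $\mu_1(\omega_2)$ exactly, so this bulk dilutes whatever improvement you achieve near the ends.

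The paper's argument avoids this by \emph{not} perturbing $W$ on the full cylinder. It first invokes the dimension-reduction Theorem~\ref{mixed case ell to 0} to find a fixed thin cylinder $\Omega_{\ell_0}$ and a function $u_{\ell_0}^{\epsilon_0}\in V(\Omega_{\ell_0})$ with
\[
\int_{\Omega_{\ell_0}}|A\nabla u_{\ell_0}^{\epsilon_0}\cdot\nabla u_{\ell_0}^{\epsilon_0}|^{p/2}<\mu_1(\omega_2)\int_{\Omega_{\ell_0}}|u_{\ell_0}^{\epsilon_0}|^p.
\]
For large $\ell$ it then builds $\phi_\ell$ by placing translated copies of $u_{\ell_0}^{\epsilon_0}$ at the two ends, a linear-in-$x_1$ transition $\xi(x_1)W(X_2)/\alpha$ of width $\alpha$, and \emph{zero} in the whole interior $\Omega_{\ell-\ell_0-\alpha}$. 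Because the interior contributes nothing to numerator or denominator, the Rayleigh quotient of $\phi_\ell$ is governed entirely by the fixed pieces at the ends plus a transition error $O(1/\alpha)$; choosing $\alpha$ large (but fixed) yields $\lambda^1_M(\Omega_\ell)\le\mu_1(\omega_2)-\delta$ for all $\ell>\ell_0+\alpha$. The missing idea in your proposal is precisely this: the competitor must be identically zero on most of $\Omega_\ell$, and the ``good'' piece must come from the small-$\ell$ analysis, not from a first-order expansion about $W$.
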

	
	The  main steps to  prove the above theorem uses the same argument as in \cite{crs}. 	The first step is to study a ``dimension reduction" problem, namely we  let $\ell$ go to zero in \eqref{mixed eigenvalue}. As a matter of fact it turns out that  $\lim_{\ell \rightarrow 0} \lambda_M^1(\omegal) < \mu_1(\omega_2)$  if and only if $A_{12}\cdot\nabla_{X_2} W\neq 0$ a.e. on $\omega_2$. This provides us the main tool to  construct  test functions on $\Omega_\ell$   in order to prove the gap phenomenon for large values of $\ell$. We address the readers to  \cite{abp}, \cite{br}, \cite{ff}  and the references there in, for the general study of problems on ``dimension reduction".

	Asymptotic behavior when the parameter $\ell\to\infty$ for different type of problems subject to different boundary conditions were studied in past. We refer to \cite{CS1} and \cite{CS2}  for the study of Stokes problem and elliptic equations with Neuman boundary conditions.  Asymptotic behaviour for the minimizers of purely variational problem is done in \cite{mojsic}, \cite{ldm}.  We refer to \cite{am}, \cite{new1}, \cite{chipot1}, \cite{chipot2}, \cite{b}, \cite{arn1}, \cite{ka}, \cite{karen}, \cite{pi}, \cite{pi1}, \cite{pi2}, \cite{new2}, \cite{Sen1}, \cite{Sen2}, \cite{karen1} and the reference mentioned there in  for other related work in this direction. 
	
	\smallskip
	
	\section{Some Preliminary results}
	In this section we will summarize some standard features about eigenvalues and eigenfunctions of p-Laplacian type operators. Let $\Omega$ be a bounded open regular subset of $\mathbb R^n$, $1<p<\infty$, we will denote by $W^{1,p}(\Omega)$, $W^{1,p}_0(\Omega)$ the usual spaces of functions defined by 
	
	$$
	W^{1,p}(\Omega)=\{v\in L^p(\Omega):\partial_{x_i}v\in L^p(\Omega), i=1,2,\cdots,n\},
	$$
	equipped with the norm 
		\begin{equation*}
	|| v||_{p,\Omega}=\bigg\{\int_{\Omega}\bigl(|v|^p+|\nabla v|^p\bigr)\bigg\}^{1/p},
	\end{equation*}
	and
	
	$$
	W^{1,p}_0(\Omega)=\{v\in W^{1,p}(\Omega):v=0\text{ on }\partial\Omega\}.
	$$
	
	Thanks to the classical Poincar\'e inequality, we will always assume that the space $W^{1,p}_0(\Omega)$ is
	equipped with the norm 
	
	\begin{equation}\label{norm}
	|| v||_{p,\Omega}^p=\int_{\Omega}|\nabla v|^p.
	\end{equation}

	Let $A(x)$ be a symmetric, uniformly positive definite and uniformly bounded matrix in $\rn$. We assume that $A(x)$ is $C^1$ i.e. each component is $C^1.$ Therefore we consider the following Dirichlet eigenvalue problem:

	\begin{equation}\label{gen p}
	\begin{cases}
	-\text{div}\big(|A\nabla u\cdot\nabla u|^\frac{p-2}{2}A\nabla u\big)=\lambda(\Omega)|u|^{p-2}u\;\;\;\text{  in }\Omega,\\ u=0\;\;\;\text{ on }\partial\Omega.
	\end{cases}
	\end{equation}
	 For reader convenience we will refer to the weak formulation of \eqref{gen p}, which consists in the following:
	\begin{equation*}\label{dirichlet weak}
	\begin{cases}
	u\in W^{1,p}_0(\Omega),\\
	\displaystyle\int_{\Omega}|A\nabla u\cdot\nabla u|^\frac{p-2}{2}A\nabla u\cdot\nabla v=\lambda(\Omega)\displaystyle\int_{\Omega}|u|^{p-2}uv\;\;\text{ for any } v\in W^{1,p}_0(\Omega).
	\end{cases}
	\end{equation*}
	We denote by $\lambda^1_D(\Omega)$ and $u_0$ the first eigenvalue and the first eigenfunction of \eqref{gen p} respectively. Now we collect some properties of the first eigenpair $(\lambda^1_D(\Omega),u_0)$.
\begin{proposition}
The following properties verified by $\lambda^1_D(\Omega)$ and $u_0$ 
	\begin{itemize}
		\item $\lambda_D^1(\Omega)$ is finite and strictly positive.
		\item $\lambda_D^1(\Omega)$ fulfill the following variational chracterization by means of the Rayleigh quotient,
		          \begin{align}\label{eigen value}
		          \lambda^1_D(\Omega)
		          =\inf_{\substack{u\in W^{1,p}_0(\Omega)\\u\neq 0 }}\frac{\int_{\Omega}|A\nabla u\cdot\nabla u|^{\frac{p}{2}}}{\int_{\Omega}|u|^p}=\frac{\int_{\Omega}|A\nabla u_0\cdot\nabla u_0|^{\frac{p}{2}}}{\int_{\Omega}|u_0|^p}.
		          \end{align}
		 \item $u_0$ is bounded and is in $C^{1,\gamma}(\Omega)$ for some $\gamma>0$.
		 \item $\lambda_D^1(\Omega)$ is simple and the function $u_0$ does not change sign in $\Omega.$
	\end{itemize}
\end{proposition}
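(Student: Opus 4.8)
\emph{Approach.} The plan is to obtain all four items by the direct method in the calculus of variations, together with classical regularity and comparison results for quasilinear operators, treating them in the order listed.

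\emph{Finiteness, positivity, variational characterization.} Set $R(u)=\int_{\Omega}|A\nabla u\cdot\nabla u|^{p/2}\big/\int_{\Omega}|u|^p$ for $u\in W^{1,p}_0(\Omega)\setminus\{0\}$ and let $\lambda^1_D(\Omega)=\inf R$. Uniform boundedness of $A$ makes $R$ finite on any fixed nonzero $u\in C^\infty_c(\Omega)$, so $\lambda^1_D(\Omega)<\infty$; uniform positive definiteness ($A\xi\cdot\xi\ge\alpha|\xi|^2$) together with the Poincar\'e inequality gives $R(u)\ge\alpha^{p/2}\int_\Omega|\nabla u|^p\big/\int_\Omega|u|^p\ge c>0$, so $\lambda^1_D(\Omega)>0$. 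To attain the infimum I would take a minimizing sequence $(u_k)$ normalized by $\|u_k\|_{L^p(\Omega)}=1$; it is bounded in $W^{1,p}_0(\Omega)$, hence up to a subsequence $u_k\rightharpoonup u_0$ in $W^{1,p}_0(\Omega)$ and, by Rellich--Kondrachov, $u_k\to u_0$ in $L^p(\Omega)$, so $\|u_0\|_{L^p(\Omega)}=1$. Since $\xi\mapsto(A\xi\cdot\xi)^{p/2}$ is convex (it is the $p$-th power of the norm $\xi\mapsto(A\xi\cdot\xi)^{1/2}$, and $t\mapsto t^p$ is convex increasing on $[0,\infty)$), the functional $u\mapsto\int_\Omega(A\nabla u\cdot\nabla u)^{p/2}$ is sequentially weakly lower semicontinuous, whence $R(u_0)\le\liminf_k R(u_k)=\lambda^1_D(\Omega)$ and $u_0$ is a minimizer. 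Writing the Euler--Lagrange equation of $R$ at $u_0$ shows that $u_0$ solves \eqref{gen p} weakly with eigenvalue $\lambda^1_D(\Omega)$, which is \eqref{eigen value}.

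\emph{Regularity.} Boundedness of $u_0$ follows from Moser iteration applied to \eqref{gen p}, the right-hand side $\lambda^1_D(\Omega)|u_0|^{p-2}u_0$ being of lower order; interior $C^{1,\gamma}$ regularity is then the classical result of DiBenedetto and Tolksdorf for degenerate/singular quasilinear equations. Since $A$ is assumed $C^1$, our operator is covered by those theorems, and I would simply invoke them.

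\emph{Constant sign and simplicity.} Because $\nabla|u_0|=\pm\nabla u_0$ a.e., one has $|\nabla|u_0||=|\nabla u_0|$ and $A\nabla|u_0|\cdot\nabla|u_0|=A\nabla u_0\cdot\nabla u_0$ a.e., so $|u_0|$ is again a minimizer of $R$, i.e. a nonnegative first eigenfunction, and the Harnack inequality for quasilinear equations forces $u_0>0$ in $\Omega$; hence every first eigenfunction is, up to sign, strictly positive. The simplicity is the step I expect to be the main obstacle. Given two positive first eigenfunctions $u,v$, the plan is to use a Picone-type identity for the weighted operator: pointwise
\[
|A\nabla u\cdot\nabla u|^{p/2}-|A\nabla v\cdot\nabla v|^{(p-2)/2}A\nabla v\cdot\nabla\!\bigl(\tfrac{u^p}{v^{p-1}}\bigr)\ \ge\ 0 ,
\]
with equality a.e.\ only when $u/v$ is constant. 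Testing the weak formulation for $v$ with the function $u^p/v^{p-1}$ and using that $u$ is an eigenfunction, both the term $\int_\Omega|A\nabla u\cdot\nabla u|^{p/2}$ and the term $\int_\Omega|A\nabla v\cdot\nabla v|^{(p-2)/2}A\nabla v\cdot\nabla(u^p/v^{p-1})$ reduce to $\lambda^1_D(\Omega)\int_\Omega u^p$, so the nonnegative integrand above integrates to zero and must vanish a.e., giving $u=cv$. The delicate point is the admissibility of the test function $u^p/v^{p-1}$, since $v$ may degenerate near $\partial\Omega$; this is handled by a standard truncation/approximation argument (replacing $v$ by $v+\varepsilon$, or working on $\{v>\delta\}$, and passing to the limit). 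Combined with the previous point, this also shows any first eigenfunction is of constant sign. Alternatively, simplicity can be quoted from the existing literature on the weighted $p$-Laplacian eigenvalue problem.
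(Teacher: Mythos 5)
The paper does not actually prove this proposition: it states that the properties are well known, cites Allegretto--Huang, L\^e, Lindqvist, Tolksdorf and Trudinger for the pure $p$-Laplacian case, and asserts that the anisotropic case follows by ``obvious slight modifications.'' Your outline is a correct and faithful reconstruction of the route those references take: the direct method together with convexity of $\xi\mapsto(A\xi\cdot\xi)^{p/2}$ and Rellich--Kondrachov gives existence of a normalized minimizer, which then satisfies \eqref{gen p} weakly; Moser iteration yields $L^\infty$ and DiBenedetto--Tolksdorf yields interior $C^{1,\gamma}$; replacing $u_0$ by $|u_0|$ and invoking Harnack gives strict positivity; and simplicity follows from the anisotropic Picone identity (which is exactly the inequality the paper establishes as Theorem~\ref{picone}), with the admissibility of $u^p/v^{p-1}$ handled by the standard $v\mapsto v+\varepsilon$ regularization. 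So your proposal is correct and, modulo the fact that the paper merely defers to the literature rather than writing anything out, it is essentially the same approach; the one thing you might make explicit is that the Picone argument as you state it needs $u,v$ bounded and $v$ locally bounded away from zero, which your earlier regularity and Harnack steps supply.
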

	In the case of pure p-Laplacian eigenvalue problem the properties listed above are well known. The reader is addressed to \cite{huang}, \cite{anle}, \cite{lindqvist}, \cite{tolk}, \cite{trudinger} for the proof. In the general case of equation \eqref{gen p} the same results can be proved with obvious slight modifications.
	
	One of the main tools for the proof of the main result of this paper is the following Picone's identity. For the sake of completeness we give here the proof of this fundamental inequality (see also \cite{huang}).
	\begin{theorem}\textbf{(Picone's identity)}\label{picone}
		Suppose $A$ is a symmetric positive definite matrix on $\mathbb R^n$ and $u,v$ two differentiable functions with $u\geq 0$ and $v>0$. Define 
		$$
		L(u,v)=|A\nabla u\cdot\nabla u|^\frac{p}{2}-\frac{pu^{p-1}|A\nabla v\cdot\nabla v|^\frac{p-2}{2}(A\nabla v\cdot\nabla u)}{v^{p-1}}+\frac{(p-1)u^p|A\nabla v\cdot\nabla v|^\frac{p-2}{2}(A\nabla v\cdot\nabla v)}{v^p},
		$$
		and
		$$R(u,v)=|A\nabla u\cdot\nabla u|^\frac{p}{2}-\nabla\bigg(\frac{u^p}{v^{p-1}}\bigg)|A\nabla v\cdot\nabla v|^\frac{p-2}{2}A\nabla v.
		$$
		Then $L(u,v)=R(u,v)\geq 0$. Moreover $L(u,v)=0$ a.e. in $\Omega$ if and only if $\nabla(\frac{u}{v})=0$ a.e. in $\Omega$.
	\end{theorem}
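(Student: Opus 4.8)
The plan is to establish the three assertions in the order stated: the pointwise identity $L(u,v)=R(u,v)$, the inequality $L(u,v)\ge 0$, and the equality characterization.

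First, for $L=R$: this is purely algebraic. Apply the product rule to get
\[
\nabla\!\left(\frac{u^p}{v^{p-1}}\right)=p\,\frac{u^{p-1}}{v^{p-1}}\,\nabla u-(p-1)\,\frac{u^p}{v^p}\,\nabla v,
\]
substitute this into the definition of $R(u,v)$, and use the symmetry of $A$ to rewrite $\nabla u\cdot A\nabla v=A\nabla v\cdot\nabla u$. Collecting the two resulting terms against $|A\nabla v\cdot\nabla v|^{\frac{p-2}{2}}$ reproduces exactly the expression $L(u,v)$.

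Next, for $L\ge 0$: since $A$ is symmetric and positive definite, $\langle\xi,\eta\rangle_A:=A\xi\cdot\eta$ is an inner product and $|\xi|_A:=(A\xi\cdot\xi)^{1/2}$ a norm, so $|A\nabla u\cdot\nabla u|^{p/2}=|\nabla u|_A^{\,p}$ and likewise for $v$. Put $\theta=u/v\ge 0$. On $\{\nabla v=0\}$ one has $L=|\nabla u|_A^{\,p}\ge 0$. On $\{\nabla v\neq 0\}$, the Cauchy--Schwarz inequality $\langle\nabla v,\nabla u\rangle_A\le|\nabla v|_A|\nabla u|_A$ together with the negativity of the coefficient $-p\,\theta^{p-1}|\nabla v|_A^{\,p-2}$ gives
\[
L(u,v)\ \ge\ |\nabla u|_A^{\,p}-p\,\theta^{p-1}|\nabla v|_A^{\,p-1}|\nabla u|_A+(p-1)\bigl(\theta|\nabla v|_A\bigr)^p .
\]
Writing $X=|\nabla u|_A$ and $Y=\theta|\nabla v|_A$, the right-hand side is $X^p-pXY^{p-1}+(p-1)Y^p$, which is $\ge 0$ by Young's inequality with exponents $p$ and $p/(p-1)$ (equivalently: for fixed $Y$, the function $X\mapsto X^p-pXY^{p-1}+(p-1)Y^p$ attains its minimum, value $0$, at $X=Y$).

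Finally, the equality case: $L=0$ forces equality simultaneously in Cauchy--Schwarz and in $X^p+(p-1)Y^p\ge pXY^{p-1}$. Equality in Cauchy--Schwarz for the positive definite form $\langle\cdot,\cdot\rangle_A$ means $\nabla u$ and $\nabla v$ are nonnegatively proportional, say $\nabla u=\mu\,\nabla v$ with $\mu\ge 0$ (covering the degenerate subcases where one of the gradients vanishes); equality in the scalar inequality means $X=Y$, i.e. $|\nabla u|_A=\theta|\nabla v|_A$. If $\nabla v\neq 0$ these give $\mu=\theta$, hence $\nabla u=\theta\nabla v$; if $\nabla v=0$ then $L=|\nabla u|_A^{\,p}=0$ gives $\nabla u=0$. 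In either case, from
\[
\nabla\!\left(\frac uv\right)=\frac{\nabla u-\theta\,\nabla v}{v},
\]
we get $\nabla(u/v)=0$. Conversely, inserting $\nabla u=\theta\nabla v$ into $L$ gives $0$ directly. All steps are pointwise, so the ``a.e.'' statements follow at once. I expect no serious obstacle here; the only point demanding care is the bookkeeping in the equality case — matching the equality conditions of Cauchy--Schwarz and of Young's inequality and handling the degenerate loci $\{\nabla v=0\}$ and $\{u=0\}$ — so that the characterization $\nabla(u/v)=0$ comes out exactly.
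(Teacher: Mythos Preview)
Your proof is correct and follows essentially the same approach as the paper: compute $L=R$ by expanding the gradient of $u^p/v^{p-1}$, then establish $L\ge 0$ by combining the Cauchy--Schwarz inequality for the inner product $\langle\xi,\eta\rangle_A=A\xi\cdot\eta$ with Young's inequality, and read off the equality case from the equality conditions of those two inequalities. The paper writes $L$ explicitly as a sum of two nonnegative pieces (the Young defect and the Cauchy--Schwarz defect) rather than as a chain of two inequalities, but this is only a cosmetic difference; your introduction of $X=|\nabla u|_A$, $Y=\theta|\nabla v|_A$ and your explicit treatment of the converse implication are minor presentational improvements over the paper's argument.
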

	\begin{proof}
	The equality case $L(u,v)=R(u,v)$ trivially follows by expanding $R(u,v)$. Now by hypothesis on the matrix $A$, we can write $|A\nabla u\cdot\nabla u|=||A^\frac{1}{2}\nabla u||^2$. Thus
		\begin{align*}
		L(u,v)
		&=|A\nabla u\cdot\nabla u|^\frac{p}{2}-\frac{pu^{p-1}|A\nabla v\cdot\nabla v|^\frac{p-2}{2}(A\nabla v\cdot\nabla u)}{v^{p-1}}+\frac{(p-1)u^p|A\nabla v\cdot\nabla v|^\frac{p-2}{2}(A\nabla v\cdot\nabla v)}{v^p}\\
		&=||A^\frac{1}{2}\nabla u||^p+(p-1)\frac{u^p}{v^p}||A^\frac{1}{2}\nabla v||^p-\frac{pu^{p-1}||A^\frac{1}{2}\nabla v||^{p-2}(A\nabla v\cdot\nabla u)}{v^{p-1}}\\
		&=p\bigg(\frac{||A^\frac{1}{2}\nabla u||^p}{p}+\frac{u^p||A^\frac{1}{2}\nabla v||^p}{qv^p}\bigg)-\frac{pu^{p-1}||A^\frac{1}{2}\nabla v||^{p-2}(A\nabla v\cdot\nabla u)}{v^{p-1}}\\
		&=p\bigg(\frac{||A^\frac{1}{2}\nabla u||^p}{p}+\frac{(u||A^\frac{1}{2}\nabla v||)^{(p-1)q}}{qv^{(p-1)q}}\bigg)-\frac{pu^{p-1}||A^\frac{1}{2}\nabla u||\;||A^\frac{1}{2}\nabla v||^{p-1}}{v^{p-1}}\\
		&\;\;\;\;\;+\frac{pu^{p-1}||A^\frac{1}{2}\nabla v||^{p-2}}{v^{p-1}}\big(||A^\frac{1}{2}\nabla u||\;||A^\frac{1}{2}\nabla v||-A\nabla u\cdot\nabla v\big).
		\end{align*}
	   Using Young's inequality we have,
	   
	   $$\frac{||A^\frac{1}{2}\nabla u||^p}{p}+\frac{(u||A^\frac{1}{2}\nabla v||)^{(p-1)q}}{qv^{(p-1)q}}\geq\frac{u^{p-1}||A^\frac{1}{2}\nabla u||\;||A^\frac{1}{2}\nabla v||^{p-1}}{v^{p-1}}
	   $$
	   where $1/p+1/q=1.$ Equality holds when $$||A^\frac{1}{2}\nabla u||=\frac{u}{v}||A^\frac{1}{2}\nabla v||.$$Therefore we get $L(u,v)\geq 0.$ So when, $L(u,v)(x_0)=0$ and $u(x_0)\neq 0$ we must have $||A^\frac{1}{2}\nabla u||=\frac{u}{v}||A^\frac{1}{2}\nabla v||$ and $||A^\frac{1}{2}\nabla u||\;||A^\frac{1}{2}\nabla v||=A\nabla u\cdot\nabla v$ and thus we obtain $A^\frac{1}{2}\nabla u=\frac{u}{v}A^\frac{1}{2}\nabla v$. Therefore, $\nabla\big(\frac{u}{v}\big)(x_0)=0.$ On the other hand if, $u(x_0)=0$ then $\nabla u=0$ a.e. on $\{u(x)=0\}$ and thus $\nabla\big(\frac{u}{v}\big)=0$ a.e. on $\{u(x)=0\}$. Therefore we conclude that $\nabla(\frac{u}{v})=0$ a.e. in $\Omega.$
	\end{proof}

\section{Convergence of the first eigenvalue for Dirichlet case.}\label{dirichlet}
	Resuming the notation used in the introduction we denote $\Omega_\ell=\ell\omega_1\times\omega_2$ to be an open subset of $\mathbb R^n$, where $\omega_1$, $\omega_2$ are two open bounded sets in $\mathbb R^m$, and $\mathbb R^{n-m}$ respectively and $\ell >0$. The variables in $\omega_1$, and $\omega_2$ are denoted by $X_1$ and $X_2$ respectively. We will write $x=(X_1,X_2)\in \mathbb R^{m}\times \mathbb R^{n-m} $ accordingly.
	
	The matrix 
	\[
	A=
	\begin{pmatrix}
	A_{11}(x)     & A_{12}(X_2)\\
	A_{12}^t(X_2) & A_{22}(X_2)
	\end{pmatrix}
	,\]
	is an $n\times n$-symmetric matrix, and assume that the block matrix $A_{22}$ is $C^1$ regularity.
	
We will assume that $A$ is uniformly bounded and uniformly positive definite matrix on $\mathbb R^m\times\omega_2$; namely there exists two positive constants $M$, $\lambda$ such that $$||A(x)||\leq M\;\;\text{ a.e. }x\in\mathbb R^m\times\omega_2,$$ $$A(x)\xi\cdot\xi\geq\lambda|\xi|^2\;\;\; \text{ a.e. }x\in\mathbb R^m\times\omega_2,\;\;\forall\xi\in\mathbb R^n.$$
In the following $||\cdot||$ will denote the norm of matrices, $|\cdot|$ the euclidean norm, and  $``\cdot"$ the usual euclidean scalar product. 
	
In this section we investigate the asymptotic behaviour of the first eigenvalue $\lambda^1_D(\Omega_\ell)$ of the problem \eqref{dirichlet case} for $\ell \rightarrow \infty$. Indeed we prove Theorem 1.1 which claim that $\lambda^1_D(\Omega_\ell)$ converges to the first eigenvalue $\mu_1(\omega_2)$ of the problem \eqref{dirichlet cross section}. For the reader convenience we quote the weak formulation of the problem  \eqref{dirichlet cross section}.

\begin{equation}\label{egneq1}
	\begin{cases}
	 u\in W^{1,p}_0(\omega_2),\\
	 \displaystyle\int_{\omega_2}|A_{22}\nabla_{X_2} u\cdot\nabla_{X_2}u|^\frac{p-2}{2}A_{22}\nabla_{X_2} u\cdot\nabla_{X_2} v=\mu(\omega_2)\int_{\omega_2}|u|^{p-2}uv\;\;
	\forall v\in W^{1,p}_0(\omega_2).
	\end{cases}
	\end{equation}
	
	\noindent Remember that $\mu_1(\omega_2)$ and $W$ denote the first eigenvalue and the first normalized \mbox{($ ||W||_{L^p(\omega_2)} =1$)}  eigenfunction of the problem \eqref{egneq1} respectively. 
	
	As observed in the introduction, the first eigenvalue $\mu_1(\omega_2)$ has a variational characterization by the Rayleigh quotient:
	\begin{align*}\label{mixed cross section eigen value}
	\mu_1(\omega_2) &=\inf\bigg\{\int_{\omega_2}|A_{22}(X_2)\nabla_{X_2} u\cdot\nabla_{X_2} u|^{\frac{p}{2}}:u\in W^{1,p}_0(\omega_2),\int_{\omega_2}|u|^p=1\bigg\}\nonumber\\
	&=\inf_{\substack{u\in W^{1,p}_0(\omega_2)\\u\neq 0 }}\frac{\int_{\omega_2}|A_{22}(X_2)\nabla_{X_2} u\cdot\nabla_{X_2} u|^{\frac{p}{2}}}{\int_{\omega_2}|u|^p}.
	\end{align*}
	Moreover, $\mu_1(\omega_2)$ is simple and the eigenfunction $W$ is differentiable and has constant sign in the domain, that we should fix as the positive sign in the sequel. 
	
\begin{proof}[\textbf{Proof of Theorem \ref{conv for dirichlet case}}]
		
		By an abuse of notation we still denote with $W$ the extension of $W$ on $\mathbb{R}^m\times \omega_2$ defined by setting $W(X_1,X_2)=W(X_2)$.
		Then we have
		\begin{equation}\label{egn1}
		-\text{div}\big(|A\nabla W\cdot\nabla W|^\frac{p-2}{2}A\nabla W\big)=\mu_1(\omega_2)\;|W|^{p-2}W\;\;\;\text{  in }\Omega_\infty=\mathbb R^m\times\omega_2.
		\end{equation}
		Let $\phi$ be any function in $C_c^\infty(\Omega_\ell)$. We are now in position to use Picone's identity \ref{picone} because $W$ is $C^1$ and $W>0$, then we get
		
		$$
		|A\nabla(|\phi|)\cdot\nabla(|\phi|)|^\frac{p}{2}-\nabla\bigg(\frac{|\phi|^p}{W^{p-1}}\bigg)|A\nabla W\cdot\nabla W|^\frac{p-2}{2}A\nabla W\geq 0.
		$$
		Integrating over $\Omega_\ell$ and using Green's theorem we deduce
		\begin{equation}\label{green apply}
		\int_{\Omega_\ell}|A\nabla\phi\cdot\nabla\phi|^\frac{p}{2}-\int_{\Omega_\ell}\nabla\bigg(\frac{|\phi|^p}{W^{p-1}}\bigg)|A\nabla W\cdot\nabla W|^\frac{p-2}{2}A\nabla W\geq 0,
		\end{equation}

		$$\int_{\Omega_\ell}|A\nabla\phi\cdot\nabla\phi|^\frac{p}{2}\geq\int_{\Omega_\ell}-\text{div}\bigg(|A\nabla W\cdot\nabla W|^\frac{p-2}{2}A\nabla W\bigg)\frac{|\phi|^p}{W^{p-1}}.
		$$
		
		\noindent \text{Then using \eqref{egn1} we acquire}\\
		$$
		\displaystyle\int_{\Omega_\ell}|A\nabla\phi\cdot\nabla\phi|^\frac{p}{2} \geq\mu_1(\omega_2)\int_{\Omega_\ell}W^{p-1}\frac{|\phi|^p}{W^{p-1}}=\mu_1(\omega_2)\int_{\Omega_\ell}|\phi|^p.
		$$
		
		\noindent Since the last inequality holds true for any $\phi\in C_c^\infty(\Omega_\ell)$ we deduce by density in $W_0^{1,p}(\Omega_\ell)$ 
		$$
		\mu_1(\omega_2)\leq\inf_{\substack{\phi\in W^{1,p}_0(\Omega_\ell)\\ \phi\neq 0 }}\frac{\int_{\Omega_\ell}|A\nabla \phi\cdot\nabla \phi|^{\frac{p}{2}}}{\int_{\Omega_\ell}|\phi|^p}=\lambda^1_D(\omegal).
		$$
		
		The estimate from below for the eigenvalue $\lambda^1_D(\Omega_\ell)$ quoted in Theorem \ref{conv for dirichlet case} is then proved. \\
		To prove the estimate from above we use a suitable test function in the Rayleigh quotient characterizing $\lambda^1_D(\Omega_\ell)$. Let us choose $v_\ell$ be a smooth function in $W^{1,p}_0(\ell\omega_1)$ such that
		\begin{itemize}
			\item $v_\ell=1$ in $\frac{\ell}{2}\omega_1$;
			\item $0\leq v_\ell\leq 1$, $|\nabla v_\ell|\leq\frac{1}{\ell}$ everywhere.
		\end{itemize}
		Let $W$ be the first eigenfunction of the section problem as above. The function
		$$u_\ell(x)=v_\ell(X_1)W(X_2)\in W^{1,p}_0(\Omega_\ell)$$
		is a good test function in  \eqref{eigen value}.
		Thus, using Minkowski inequality and structure condition of the matrix $A$ we have\\
		\begin{align*}
		&\lambda^1_D(\omegal)\leq
		\frac{\displaystyle\int_{\Omega_\ell}|A\nabla(v_\ell W)\cdot\nabla(v_\ell W)|^\frac{p}{2}}{\displaystyle\int_{\Omega_\ell}|u_\ell|^p}
		\\
		&=
		\frac{\displaystyle\int_{\Omega_\ell}|(A_{11}\nabla_{X_1}v_\ell\cdot\nabla_{X_1}v_\ell)\;W^2+(2A_{12}\nabla_{X_2}W\cdot\nabla_{X_1}v_\ell)\;(v_\ell W)+(A_{22}\nabla_{X_2}W\cdot\nabla_{X_2}W)\;v_\ell^2|^\frac{p}{2}}{\displaystyle\int_{\ell\omega_1}|v_\ell|^p}
		\\
		&\leq
		\bigg\{\bigg(\int_{\Omega_\ell}|(A_{22}\nabla_{X_2}W\cdot\nabla_{X_2}W)\;v_\ell^2|^\frac{p}{2}\bigg)^{2/p}	+\bigg(\int_{\Omega_\ell}|(A_{11}\nabla_{X_1}v_\ell\cdot\nabla_{X_1}v_\ell)\;W^2|^\frac{p}{2}\bigg)^\frac{2}{p}
		\\
		&+
		\bigg(\int_{\Omega_\ell}|(2A_{12}\nabla_{X_2}W\cdot\nabla_{X_1}v_\ell)\;(v_\ell W)|^\frac{p}{2}\bigg)^\frac{2}{p}\bigg\}^\frac{p}{2}\bigg/\int_{\ell\omega_1}|v_\ell|^p.
		\end{align*}
		
		Where we also used the fact that $||W||_p=1$. Recalling that $W$ is an eigenfunction we deduce
		
		\begin{align*}	
		&\lambda^1_D(\omegal)\leq
		\bigg\{\mu_1(\omega_2)^{2/p}\bigg(\int_{\ell\omega_1}|v_\ell|^p\bigg)^\frac{2}{p}+||A_{11}||_{\infty}\bigg(\int_{\ell \omega_1}|\nabla_{X_1}v_\ell|^p\int_{\omega_2}|W|^p\bigg)^\frac{2}{p}
		\\
		&+
		2||A_{12}||_{\infty}\bigg(\int_{\ell \omega_1}|\nabla_{X_1}v_\ell)|^{\frac{p}{2}}\bigg)^\frac{2}{p}
		\bigg(\int_{\omega_2}|\nabla_{X_2}W)|^{\frac{p}{2}}|W|^\frac{p}{2}\bigg)^\frac{2}{p}\bigg\}^\frac{p}{2} \bigg/\int_{\ell\omega_1}|v_\ell|^p.
		\end{align*}	
		
		In the last estimate we also used the fact that $|v_\ell|\leq 1$. Now we observe that, since $W$ is an eigenfunction and thanks to the ellipticity condition on the matrix $A$, the following implication holds true
		
		$$
		\int_{\omega_2}|\nabla_{X_2}W|^p \leq \frac{\mu_1(\omega_2)}{\lambda}\implies\int_{\omega_2}|\nabla_{X_2} W|^{\frac{p}{2}}\leq C.
		$$
		
		Using the elementary inequality $(a+b)^q\leq a^q+q2^{q-1}(b^q+a^{q-1}b)$ for $a,b\geq 0$, $q\geq 1$ and denoting with $\mathcal{L}_m$ the Lebesgue measure in $\mathbb{R}^m$ we get	
		\begin{align*}
		\lambda^1_D(\omegal) 
		&\leq
		\Bigg\{\mu_1(\omega_2)^{2/p}+\frac{||A_{11}||_\infty\big(\int_{\ell\omega_1}|\nabla_{X_1}v_\ell|^p\big)^\frac{2}{p}+2C^{\frac{2}{p}}||A_{12}||_\infty \big(\int_{\ell\omega_1}|\nabla_{X_1}v_\ell|^\frac{p}{2}\big)^\frac{2}{p}}{||v_\ell||_{p,\ell\omega_1}^2}\Bigg\}^\frac{p}{2}
		\\
		&\leq
		\Bigg\{\mu_1(\omega_2)^{2/p}+\frac{||A_{11}||_\infty\bigg(\frac{\mathcal{L}_m(\ell\omega_1)}{\ell^p}\bigg)^{2/p}+2C^{\frac{2}{p}}||A_{12}||_\infty\bigg(\frac{\mathcal{L}_m(\ell\omega_1)}{\ell^{p/2}}\bigg)^{2/p}}{\mathcal{L}_m(\frac{\ell\omega_1}{2})^{2/p}}\Bigg\}^\frac{p}{2}
		\\
		&=
		\bigg\{\mu_1(\omega_2)^{2/p}+\frac{2^m||A_{11}||_\infty}{\ell^2}+\frac{2^{m+1}C^{\frac{2}{p}}||A_{12}||_\infty}{\ell}\bigg\}^\frac{p}{2}\\
		&
		\leq
		\bigg(\mu_1(\omega_2)^{2/p}+\frac{C_1}{\ell}\bigg)^\frac{p}{2}
		\leq\mu_1(\omega_2)+p2^{\frac{p-4}{2}}\bigg(\frac{C_1^{\frac{p}{2}}}{\ell^{\frac{p}{2}}}+\frac{C_1(\mu_1(\omega_2))^{\frac{p-2}{p}}}{\ell}\bigg).
	    \end{align*}
	   
	    Hence  the estimate from above is then proved.\smallskip\\

		Clearly, for any $\ell>0$, we have $\Lambda_\infty\leq\lambda_D^1(\omegal)$ and for the lower bound of $\Lambda_\infty$ we proceed exactly in the same way as it is done in \eqref{green apply} where $\omegal$ is replaced by $\Omega_\infty.$ Then letting $\ell\to\infty$ we conclude that $\Lambda_\infty=\mu_1(\omega_2).$ 
	\end{proof}
	\section{The Gap phenomenon for mixed boundary conditions}\label{mixed}
In this section we are concerned about the mixed boundary eigenvalue problem. Let us discuss some results that would be required to the main proof of the Theorem \ref{mixed case ell to infinity}. As we mentioned in the introduction, first we study the asymptotic behavior of $\lambda^1_M(\omegal)$ as $\ell\to 0$, which is a key ingredient to proof of the Theorem \ref{mixed case ell to infinity}.\smallskip\\
\noindent An appropriate space for mixed boundary eigenvalue problem is
 $$
 V(\omegal)=\{v\in W^{1,p}(\omegal):v=0\text{ on }\gamma_\ell\},
 $$
 where $\gamma_\ell=(-\ell,\ell)\times\partial\omega_2$ and the boundary value is defined in the sense of trace. Thanks to the classical Poincar\'e inequality, the space $V(\omegal)$ becomes a Banach space with respect to the norm \eqref{norm}.\smallskip\\

\noindent The weak formulation of the eigenvalue problem \eqref{mixed eigenvalue} is given by 
\begin{equation}\label{mixed weak form}
   \begin{cases}
	u\in V(\omegal),\\
    \displaystyle\int_{\omegal}|A\nabla u\cdot\nabla u|^\frac{p-2}{2}A\nabla u\cdot\nabla v=\lambda_M(\omegal)\displaystyle\int_{\omegal}|u|^{p-2}uv\;\;\text{ for any } v\in V(\omegal).
	\end{cases}
\end{equation}
The first eigenvalue $\lambda^1_M(\omegal)$ for \eqref{mixed weak form} is associated with a variational characterization
\begin{align}\label{mixed eigen value}
\lambda^1_M(\omegal)
&=
\inf\bigg\{\int_{\omegal}|A(X_2)\nabla u\cdot\nabla u|^{\frac{p}{2}}:u\in V(\omegal),\int_{\omegal}|u|^p=1\bigg\}\nonumber\\
&=
\inf_{\substack{u\in V(\omegal)\\u\neq 0 }}\frac{\int_{\omegal}|A(X_2)\nabla u\cdot\nabla u|^{\frac{p}{2}}}{\int_{\omegal}|u|^p}.
\end{align}
 Moreover, the first eigenvalue is simple and the corresponding eigenfunction has constant sign in the domain.
\begin{theorem}[\textbf{Dimension Reduction}]\label{mixed case ell to 0}
For $p\geq 2$, we have $\lim_{\ell\to 0}\lambda^1_M(\omegal)=\Lambda$ where 
\begin{equation*}\label{Lambda expression}
\Lambda=\inf\bigg\{\int_{\omega_2}\Bigg(A_{22}(X_2)\nabla u\cdot\nabla u-\frac{|A_{12}(X_2)\cdot\nabla u|^2}{a_{11}(X_2)}\Bigg)^{\frac{p}{2}}:u\in W^{1,p}_0(\omega_2),\int_{\omega_2}|u|^p=1\bigg\}.
\end{equation*}
\end{theorem}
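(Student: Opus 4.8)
The plan is to establish the two one–sided bounds $\liminf_{\ell\to0}\lambda^1_M(\omegal)\ge\Lambda$ and $\limsup_{\ell\to0}\lambda^1_M(\omegal)\le\Lambda$ separately; in fact we would prove the lower bound for \emph{every} $\ell>0$. The whole argument rests on a pointwise algebraic identity obtained by completing the square in the $x_1$–variable. Since here $m=1$, so that $A_{11}=a_{11}(X_2)>0$ is scalar, for every $\xi=(\xi_1,\xi_2)\in\mathbb R\times\mathbb R^{n-1}$ one has
$$
A(X_2)\xi\cdot\xi=a_{11}(X_2)\Bigl(\xi_1+\tfrac{A_{12}(X_2)\cdot\xi_2}{a_{11}(X_2)}\Bigr)^2+q(X_2,\xi_2),\qquad q(X_2,\eta):=A_{22}(X_2)\eta\cdot\eta-\tfrac{(A_{12}(X_2)\cdot\eta)^2}{a_{11}(X_2)},
$$
and both summands are nonnegative: the first trivially, the second because $q(X_2,\cdot)$ is the Schur complement of the uniformly positive definite matrix $A$, whence $q(X_2,\eta)\ge\lambda|\eta|^2$. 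Taking $\xi=\nabla u=(\partial_{x_1}u,\nabla_{X_2}u)$ gives the pointwise bound $A\nabla u\cdot\nabla u\ge q(X_2,\nabla_{X_2}u)\ge0$ on $\omegal$, and with this notation $\Lambda=\inf\bigl\{\int_{\omega_2}q(X_2,\nabla u)^{p/2}:u\in W^{1,p}_0(\omega_2),\ \int_{\omega_2}|u|^p=1\bigr\}$.

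For the lower bound, fix $\ell>0$ and $u\in V(\omegal)$, $u\neq0$. By a standard slicing (Fubini–type) argument, for a.e. $x_1\in(-\ell,\ell)$ the slice $u(x_1,\cdot)$ belongs to $W^{1,p}_0(\omega_2)$, since $u$ vanishes on $\gamma_\ell=(-\ell,\ell)\times\partial\omega_2$. Using the variational definition of $\Lambda$ together with its $p$–homogeneity,
$$
\int_{\omega_2}q\bigl(X_2,\nabla_{X_2}u(x_1,\cdot)\bigr)^{p/2}\,dX_2\ \ge\ \Lambda\int_{\omega_2}|u(x_1,\cdot)|^p\,dX_2\qquad\text{for a.e. }x_1 .
$$
Integrating this in $x_1$ and combining with the pointwise bound above would yield $\int_{\omegal}|A\nabla u\cdot\nabla u|^{p/2}\ge\int_{\omegal}q(X_2,\nabla_{X_2}u)^{p/2}\ge\Lambda\int_{\omegal}|u|^p$; taking the infimum over $u$ in \eqref{mixed eigen value} gives $\lambda^1_M(\omegal)\ge\Lambda$ for every $\ell>0$, hence in particular $\liminf_{\ell\to0}\lambda^1_M(\omegal)\ge\Lambda$.

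For the upper bound the plan is to build a recovery sequence. Given $\varepsilon>0$, since $C^\infty_c(\omega_2)$ is dense in $W^{1,p}_0(\omega_2)$ and $v\mapsto\int_{\omega_2}q(X_2,\nabla v)^{p/2}$ is continuous on $W^{1,p}_0(\omega_2)$ (the integrand growing like $|\nabla v|^p$ by boundedness of $A$), pick $\phi\in C^\infty_c(\omega_2)$ with $\int_{\omega_2}|\phi|^p=1$ and $\int_{\omega_2}q(X_2,\nabla\phi)^{p/2}\le\Lambda+\varepsilon$. Set $g:=\tfrac{A_{12}\cdot\nabla\phi}{a_{11}}$, which — using the regularity of $A$ (mollifying the coefficients first if they are merely bounded) together with $a_{11}\ge\lambda>0$ — is Lipschitz with $\operatorname{supp}g\subset\operatorname{supp}\phi\Subset\omega_2$, and define $u_\ell(x_1,X_2):=\phi(X_2)-x_1\,g(X_2)$. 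Because $\phi$ and $g$ vanish near $\partial\omega_2$, we have $u_\ell\in V(\omegal)$. Since $\partial_{x_1}u_\ell=-g$ and $\nabla_{X_2}u_\ell=\nabla\phi-x_1\nabla g$, the choice of $g$ is precisely what makes the completed–square term vanish at $x_1=0$, and expanding $A\nabla u_\ell\cdot\nabla u_\ell$ would give, uniformly on $\omegal$,
$$
A\nabla u_\ell\cdot\nabla u_\ell=q(X_2,\nabla\phi)+O(\ell),\qquad |u_\ell|^p=|\phi|^p+O(\ell),
$$
the errors being controlled by $\|\phi\|_{C^1}$, $\|g\|_{C^1}$, $\|A\|_\infty$ and $\lambda$. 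Plugging $u_\ell$ into \eqref{mixed eigen value} and using $\int_{-\ell}^{\ell}dx_1=2\ell$,
$$
\lambda^1_M(\omegal)\le\frac{\int_{\omegal}(A\nabla u_\ell\cdot\nabla u_\ell)^{p/2}}{\int_{\omegal}|u_\ell|^p}=\frac{\int_{\omega_2}q(X_2,\nabla\phi)^{p/2}+O(\ell)}{1+O(\ell)}\ \xrightarrow[\ell\to0]{}\ \int_{\omega_2}q(X_2,\nabla\phi)^{p/2}\le\Lambda+\varepsilon .
$$
Hence $\limsup_{\ell\to0}\lambda^1_M(\omegal)\le\Lambda+\varepsilon$ for every $\varepsilon>0$, and together with the lower bound this proves $\lim_{\ell\to0}\lambda^1_M(\omegal)=\Lambda$.

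The hard part will be the construction of the recovery sequence: one must recognize that the correct first–order $x_1$–profile of the near-optimal test function is affine with slope $-\tfrac{A_{12}\cdot\nabla\phi}{a_{11}}$, i.e. exactly the correction that annihilates the completed–square term and turns $A_{22}\nabla\phi\cdot\nabla\phi$ into the Schur complement form $q(X_2,\nabla\phi)$ in the limit. Everything else — the slicing estimate underlying the lower bound, the $O(\ell)$ expansions (which use only that $t\mapsto t^{p/2}$ is locally Lipschitz for $p\ge2$), and the reduction from a general $W^{1,p}_0(\omega_2)$ competitor to a smooth compactly supported one together with the attendant mollification of the coefficients — is routine.
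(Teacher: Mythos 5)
Your proposal is correct and, in fact, it closes a real gap in the paper's own proof of this theorem. The overall strategy is the same on both sides: for the lower bound, complete the square in $\xi_1$ to get the pointwise Schur--complement inequality $A\nabla u\cdot\nabla u\ge q(X_2,\nabla_{X_2}u)$ and then integrate (the paper compresses your explicit slicing/Fubini step into the single displayed inequality $\eqref{lower bound for l to 0}$, but the argument is the same); for the upper bound, use a test function that is affine in $x_1$ with slope $-A_{12}\cdot\nabla(\cdot)/a_{11}$, which is exactly the correction that turns the quadratic form into its Schur complement to leading order in $\ell$.

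The significant difference is \emph{which} cross-section profile is inserted. You take a smooth compactly supported $\phi$ that is an $\varepsilon$-near-minimizer of the $\Lambda$-functional, and obtain $\limsup_{\ell\to0}\lambda^1_M(\Omega_\ell)\le\int_{\omega_2}q(X_2,\nabla\phi)^{p/2}\le\Lambda+\varepsilon$, which together with the lower bound gives exactly $\Lambda$ in the limit. The paper instead inserts $W$, the first normalized Dirichlet eigenfunction of the \emph{cross-section} problem \eqref{dirichlet cross section} (i.e.\ the minimizer of $\int_{\omega_2}|A_{22}\nabla u\cdot\nabla u|^{p/2}$, not of the Schur-complement functional), multiplied by a cutoff $\rho_\ell$ and replacing $A_{12}\cdot\nabla W/a_{11}$ by a mollification $F_\epsilon$ to stay in $W^{1,p}$. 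That computation yields $\limsup_{\ell\to0}\lambda^1_M(\Omega_\ell)\le\int_{\omega_2}q(X_2,\nabla W)^{p/2}$, and the paper then writes ``$=\Lambda$'' without justification. Since $W$ has no reason to minimize the $\Lambda$-functional, a priori one only has $\int_{\omega_2}q(X_2,\nabla W)^{p/2}\ge\Lambda$, so the paper's argument as written proves $\Lambda\le\lim\lambda^1_M\le\int_{\omega_2}q(X_2,\nabla W)^{p/2}$ but not the asserted equality $\lim\lambda^1_M=\Lambda$. (This does not affect the paper's Theorem \ref{mixed case ell to infinity}, which only needs the strict inequality $\int_{\omega_2}q(X_2,\nabla W)^{p/2}<\mu_1(\omega_2)$ under $A_{12}\cdot\nabla W\neq0$ a.e.) Your choice of a near-minimizer of $\Lambda$ is precisely what is needed to make the theorem statement come out; it also avoids the $\rho_\ell$-cutoff entirely because $\phi$ is already compactly supported. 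One small point you wave at that deserves a word: if $A_{12}$ and $a_{11}$ are only $L^\infty$, then $g=A_{12}\cdot\nabla\phi/a_{11}$ need not be Lipschitz even for smooth $\phi$, so you still need a mollification step for $g$ (exactly as the paper does with $F_\epsilon$); this is routine and costs only an extra $o(1)$ in the upper bound.
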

\begin{proof}
The reason why we find $\Lambda$ as the limiting value will be clarified by the following observation. Let
$$ 
B =
\begin{pmatrix}
 b_{11} & B_{12}\\
 B_{12}^{t} & B_{22}
\end{pmatrix}
$$
 be a positive definite $n\times n$ matrix in $\rn$ and we write any vector $z=(z_1,Z_2)\in\rn$ with $Z_2\in\re^{n-1}.$ Then it is easy to see by using elementary calculus that for any fixed $Z_2$ we have
\begin{equation}\label{min gen form}
\min_{z_1\in\re}(Bz\cdot z)^{\frac{p}{2}}
=\Bigg(B_{22}Z_2\cdot Z_2-\frac{|B_{12}Z_2|^2}{b_{11}}\Bigg)^{\frac{p}{2}}
\end{equation}
and the minimum in \eqref{min gen form} is attained for $z_1=-\frac{B_{12}Z_2}{b_{11}}$. Applying \eqref{min gen form} with $B=A(X_2)$ we obtain, for any $\ell>0,$
\begin{align}\label{lower bound for l to 0}
    \int_{\omegal}|A(X_2)\nabla u_\ell\cdot\nabla u_\ell|^{\frac{p}{2}} &\geq\int_{\omegal}\Bigg(A_{22}(X_2)\nabla_{X_2} u_\ell\cdot\nabla_{X_2} u_\ell-\frac{|A_{12}(X_2)\cdot\nabla_{X_2} u_\ell|^2}{a_{11}(X_2)}\Bigg)^{\frac{p}{2}}\nonumber\\
    &\geq\Lambda\int_{\omegal}|u_\ell|^p.
\end{align}
 It is clear by \eqref{lower bound for l to 0} the lower bound 
 \begin{equation}\label{lower bound}
  \Lambda\leq\lim_{\ell\to 0}\inf\lambda^1_M(\omegal).   
 \end{equation}
 Let $T_\ell=\{x\in\omega_2:\;\text{dist}(x,\partial\omega_2)\leq\ell\}$ be a neighbourhood of $\partial\omega_2$ for $\ell>0$. Fix for any $\beta\in (0,1)$ and let $\rho_\ell$ be an approximation of the characteristic function of $\omega_2$, as $\ell\to 0$:
 \begin{equation}
 \label{properties rho ell}
    \rho_\ell\in C_c^{\infty}(\omega_2),\quad 0\leq \rho_{\ell}\leq 1,\quad 
    \rho_{\ell}=1\text{ in }\omega_2\setminus T_{\ell}, \quad 
     |\nabla\rho_{\ell}|\leq \frac{1}{\ell^{\beta}}\text{ in }T_{\ell}\,.
\end{equation}
Hence for $\ell\to 0$ one has $\rho_\ell\to 1$ pointwise.
 Let us define a function $u_\ell$ on $\omegal$ by 
 \begin{equation*}\label{special function}
     u_\ell(x)=W(X_2)-\frac{x_1\rho_\ell(X_2)A_{12}(X_2)\cdot\nabla W}{a_{11}(X_2)}.
 \end{equation*}
  We emphasize that the function $u_\ell$ defined above does not necessarily belong to the space $W^{1,p}$, since the first eigenfunction $W$ is atmost in $C^{1,\gamma}(\omega_2)$. To resolve this difficulty, we provide a smooth approximation argument, motivated by [\cite{br}, Ch.14]. Now define a family of functions  $\{F_\epsilon\}_{\epsilon>0}$ in $C_c^\infty(\omega_2)$ by using standard mollification which satisfies the following 
 \begin{align*}
     \lim_{\epsilon\to 0}F_\epsilon(X_2)=\frac{A_{12}(X_2)\cdot\nabla W}{a_{11}(X_2)}\text{ in }L^p(\omega_2).
 \end{align*}
 Then we define 
 \begin{align}\label{epsilon l function}
     u_\ell^\epsilon(x)=W(X_2)-x_1\;\rho_\ell(X_2)\;F_\epsilon(X_2).
 \end{align}
  By using simple elementary inequality for the vectors $a,\;b$ and $q\geq 1$ $$|b|^q\geq|a|^q+q\;\big\langle|a|^{q-2}a,\;b-a\big\rangle,$$ and
 using the fact that $x_1$ is an odd function in $(-\ell,\ell)$ we infer that
  \begin{equation}\label{lp norm estimate}
      \int_{\omegal}|u_\ell^\epsilon|^p=\int_{\omegal}|W(X_2)-x_1\rho_\ell(X_2)F_\epsilon(X_2)|^p\geq\int_{\omegal}|W(X_2)|^p=2\ell\int_{\omega_2}|W|^p.
  \end{equation}
 Now 
 \begin{align*}
   &
   \int_{\omegal}|A\nabla u_\ell^\epsilon\cdot\nabla u_\ell^\epsilon|^{\frac{p}{2}}\nonumber
   \\
   &=
   \int_{\omegal}|a_{11}(\partial_{x_1}u_\ell^\epsilon)^2+2(A_{12}\cdot\nabla_{X_2}u_\ell^\epsilon)\partial_{x_1}u_\ell^\epsilon+A_{22}\nabla_{X_2}u_\ell^\epsilon\cdot\nabla_{X_2}u_\ell^\epsilon|^{\frac{p}{2}}\nonumber
   \\
   &=
   \int_{\omegal}|a_{11}\rho_\ell^2 F_\epsilon^2-2\rho_\ell F_\epsilon(A_{12}\cdot\nabla W)-2x_1\rho_\ell F_\epsilon(F_\epsilon A_{12}\cdot\nabla\rho_\ell+\rho_\ell A_{12}\cdot\nabla F_\epsilon)\nonumber
   \\
   &\;\;\;\;\;\;\;\;\;\;+(A_{22}\nabla W-x_1(F_\epsilon A_{22}\cdot\nabla\rho_\ell+\rho_\ell A_{22}\cdot\nabla F_\epsilon))\cdot(\nabla W-x_1(F_\epsilon\nabla\rho_\ell+\rho_\ell\nabla F_\epsilon))|^{\frac{p}{2}}.
  \end{align*}
  Hence by using Minkowski inequality we have 
  \begin{equation}\label{gradient estimate} 
  \int_{\omegal}|A\nabla u_\ell^\epsilon\cdot\nabla u_\ell^\epsilon|^{\frac{p}{2}}\leq(I_1+I_2)^{\frac{p}{2}},
  \end{equation}
  where
  $$
  I_1:=\bigg(\int_{\omegal}|a_{11}F_\epsilon^2-2(A_{12}\cdot\nabla W)F_\epsilon+A_{22}\nabla W\cdot\nabla W|^{\frac{p}{2}}\bigg)^{\frac{2}{p}}
  $$
  and 
  $$
  I_2:=\bigg(\int_{\omegal}|a_{11}F^2_\epsilon(\rho_\ell^2-1)+2(1-\rho_\ell)(A_{12}\cdot\nabla W)F_\epsilon-2x_1 H^\epsilon_\ell(X_2)+x_1^2 G^\epsilon_\ell(X_2)|^{\frac{p}{2}}\bigg)^{\frac{2}{p}},
  $$
  where 
 \begin{equation*}
   H^\epsilon_\ell(X_2):=\rho_\ell F^2_\epsilon (A_{12}\cdot\nabla\rho_\ell)+\rho_\ell^2 F_\epsilon (A_{12}\cdot\nabla F_\epsilon)+F_\epsilon (A_{22}\nabla W\cdot\nabla\rho_\ell)+\rho_\ell (A_{22}\nabla W\cdot\nabla F_\epsilon),
  \end{equation*}
  \begin{equation*}
    G^\epsilon_\ell(X_2):=(F_\epsilon\;A_{22}\cdot\nabla\rho_\ell+\rho_\ell\;A_{22}\cdot\nabla F_\epsilon)\cdot(F_\epsilon\nabla\rho_\ell+\rho_\ell\nabla F_\epsilon).
  \end{equation*}
  Then by properties \eqref{properties rho ell} of the function $\rho_\ell$, and for fixed $\epsilon>0$ we have 
  \begin{equation}\label{functions estimate}
      |H^\epsilon_\ell(X_2)|\leq C_1+\frac{C_2}{\ell^\beta}\text{ and } |G^\epsilon_\ell(X_2)|\leq C_3+\frac{C_4}{\ell^{2\beta}},
  \end{equation}
  where $C_i\;(i=1,2,3,4)$ are positive constants independent of $\ell$, and we define 
   $$
   K^\epsilon_\ell=\int_{\omega_2}|a_{11}F^2_\epsilon(\rho_\ell^2-1)+2(1-\rho_\ell)(A_{12}\cdot\nabla W)F_\epsilon|^{\frac{p}{2}}.
   $$
  Since $\rho_\ell\to 1$ pointwise as $\ell\to 0$ and then by dominated convergence theorem we conclude that $K^\epsilon_\ell\to 0$ as $\ell\to 0.$ Now we estimates the above integrals $I_1,\;I_2$ in the following:
  
  \noindent\textit{Estimate for $I_1$:}
  \begin{equation}\label{calculation of I_1}
     I_1\leq(2\ell)^{\frac{2}{p}}\;\bigg(\int_{\omega_2}|a_{11}F_\epsilon^2-2(A_{12}\cdot\nabla W)F_\epsilon+A_{22}\nabla W\cdot\nabla W|^{\frac{p}{2}}\bigg)^{\frac{2}{p}}. 
  \end{equation}
  \noindent\textit{Estimate for $I_2$:} Again applying Minkowski inequality and by \eqref{functions estimate} we obtain
  \begin{align}\label{calculation of I2}
     I_2
      &\leq\bigg(\int_{\omegal}|a_{11}F^2_\epsilon(\rho_\ell^2-1)+2(1-\rho_\ell)(A_{12}\cdot\nabla W)F_\epsilon|^{\frac{p}{2}}\bigg)^{\frac{2}{p}}+\bigg(\int_{\omegal}|x_1^2 G^\epsilon_\ell(X_2)-2x_1 H^\epsilon_\ell(X_2)|^{\frac{p}{2}}\bigg)^{\frac{2}{p}}\nonumber
      \\
      &\leq
      (2\ell)^{\frac{2}{p}} (K_\ell^\epsilon)^{\frac{2}{p}}+\bigg(\int_{\omegal}|x_1^2 G^\epsilon_\ell(X_2)|^{\frac{p}{2}}\bigg)^{\frac{2}{p}}+\bigg(\int_{\omegal}|2x_1 H^\epsilon_\ell(X_2)|^{\frac{p}{2}}\bigg)^{\frac{2}{p}}\nonumber\\
      &\leq
       (2\ell)^{\frac{2}{p}}  (K_\ell^\epsilon)^{\frac{2}{p}}+\ell^2\bigg(\int_{\omegal}| G^\epsilon_\ell(X_2)|^{\frac{p}{2}}\bigg)^{\frac{2}{p}}+2\ell\bigg(\int_{\omegal}| H^\epsilon_\ell(X_2)|^{\frac{p}{2}}\bigg)^{\frac{2}{p}}\nonumber
       \\
       &=
        (2\ell)^{\frac{2}{p}}\bigg[  (K_\ell^\epsilon)^{\frac{2}{p}}+\ell^2\bigg(\int_{\omega_2}| G^\epsilon_\ell(X_2)|^{\frac{p}{2}}\bigg)^{\frac{2}{p}}+2\ell\bigg(\int_{\omega_2}| H^\epsilon_\ell(X_2)|^{\frac{p}{2}}\bigg)^{\frac{2}{p}}\bigg]\nonumber
        \\
        &\leq
        (2\ell)^{\frac{2}{p}}\bigg[ (K_\ell^\epsilon)^{\frac{2}{p}}+C_1\ell^2+C_2\ell^{2-2\beta}+2C_3\ell+2C_4\ell^{1-\beta}\bigg]=:(2\ell)^{\frac{2}{p}}\bigg( (K_\ell^\epsilon)^{\frac{2}{p}}+C(\ell)\bigg).
      \end{align}
  Now plugging the estimates \eqref{calculation of I_1}, \eqref{calculation of I2} into \eqref{gradient estimate}
  we obtain 
  \begin{align}\label{final gradient estimate}
    \int_{\omegal}|A\nabla u_\ell^\epsilon\cdot\nabla u_\ell^\epsilon|^{\frac{p}{2}}
    \leq 2\ell\bigg[\bigg(\int_{\omega_2}|a_{11}F_\epsilon^2-2(A_{12}\cdot\nabla W)F_\epsilon+A_{22}\nabla W\cdot\nabla W|^{\frac{p}{2}}\bigg)^{\frac{2}{p}}+ (K_\ell^\epsilon)^{\frac{2}{p}}+C(\ell)\bigg]^{\frac{p}{2}}.
  \end{align}
 Therefore combining \eqref{lp norm estimate} and \eqref{final gradient estimate} we have 
 \begin{align*}
     &\lim_{\ell\to 0}\sup\lambda^1_M(\omegal)\nonumber\\ 
     &\leq
     \lim_{\ell\to 0}\sup\frac{\displaystyle\int_{\omegal}|A\nabla u_\ell^\epsilon\cdot\nabla u_\ell^\epsilon|^{\frac{p}{2}}}{\displaystyle\int_{\omegal}|u_\ell^\epsilon|^p}\nonumber
     \\
     &\leq
     \lim_{\ell\to 0}\sup\bigg[\bigg(\displaystyle\int_{\omega_2}|a_{11}F_\epsilon^2-2(A_{12}\cdot\nabla W)F_\epsilon+A_{22}\nabla W\cdot\nabla W|^{\frac{p}{2}}\bigg)^{\frac{2}{p}}+ (K_\ell^\epsilon)^{\frac{2}{p}}+C(\ell)\bigg]^{\frac{p}{2}}\nonumber
     \\
     &=
     \displaystyle\int_{\omega_2}|a_{11}F_\epsilon^2-2(A_{12}\cdot\nabla W)F_\epsilon+A_{22}\nabla W\cdot\nabla W|^{\frac{p}{2}}.
 \end{align*}
 Letting $\epsilon\to 0$ and using the fact $F^2_{\epsilon}\to\frac{|A_{12}(X_2)\cdot\nabla W|^2}{a_{11}(X_2)^2}$ in $L^{\frac{p}{2}}(\omega_2)$, we infer that
 $$
 \lim_{\ell\to 0}\sup\lambda^1_M(\omegal)\leq\displaystyle\int_{\omega_2}\Bigg(A_{22}(X_2)\nabla W\cdot\nabla W-\frac{|A_{12}(X_2)\cdot\nabla W|^2}{a_{11}(X_2)}\Bigg)^{\frac{p}{2}}=\Lambda,
 $$
 which together with \eqref{lower bound} gives the desired result.
\end{proof}
\smallskip
\begin{proof}[\textbf{Proof of Theorem \ref{mixed case ell to infinity}}:]
\textit{Case 1:}
Suppose the condition holds first i.e. $A_{12}\cdot\nabla W\neq 0$ a.e. on $\omega_2$. Then we obtain
$$
\Lambda\leq\displaystyle\int_{\omega_2}\Big|A_{22}(X_2)\nabla W\cdot\nabla W-\frac{|A_{12}(X_2)\cdot\nabla W|^2}{a_{11}(X_2)}\Big|^{\frac{p}{2}}<\int_{\omega_2}|A_{22}(X_2)\nabla W\cdot\nabla W|^{\frac{p}{2}}=\mu_1(\omega_2).
$$
By the proof of the above theorem there exists $\ell_0>0$ and $\epsilon_0>0$ such that the function $u_{\ell_0}^{\epsilon_0}$ defined by \eqref{epsilon l function} satisfies
\begin{equation}\label{ell0 domain estimate}
    \int_{\Omega_{\ell_0}}|A\nabla u_{\ell_0}^{\epsilon_0}\cdot\nabla u_{\ell_0}^{\epsilon_0}|^{\frac{p}{2}}<\mu_1(\omega_2)\int_{\Omega_{\ell_0}}|u_{\ell_0}^{\epsilon_0}|^p.
\end{equation}
Let $\alpha>1$ be a constant whose value will be choose later. For $\ell>\ell_0+\alpha$ we define a function $\phi_{\ell}$ as follows,
\begin{equation*}
\phi_{\ell}(x_1,X_2)= \begin{cases}
                      u_{\ell_0}^{\epsilon_0}(x_1-\ell+\ell_0,X_2) & \text{in }(\ell-\ell_0,\ell)\times\omega_2,\\
                      \frac{\xi(x_1)\;W(X_2)}{\alpha} & \text{ in }(\ell-\ell_0-\alpha,\ell-\ell_0)\times\omega_2,\\
                      0 & \text{ in }\Omega_{\ell-\ell_0-\alpha},\\
                      \frac{\xi(x_1)\;W(X_2)}{\alpha} & \text{ in }(\ell_0-\ell,-(\ell-\ell_0-\alpha))\times\omega_2,\\
                      u_{\ell_0}^{\epsilon_0}(x_1+\ell-\ell_0,X_2) & \text{ in }(-\ell,\ell_0-\ell)\times\omega_2,
                       \end{cases}
\end{equation*}
where 
\begin{equation*}
    \xi(x_1)=\begin{cases}x_1-\ell+\ell_0+\alpha & \text{ if }x_1\in (\ell-\ell_0-\alpha,\ell-\ell_0),\\
    -x_1-\ell+\ell_0+\alpha & \text{ if }x_1\in(\ell-\ell_0,-\ell+\ell_0+\alpha).
    \end{cases}
\end{equation*}
By simple change of variable we get 

$$
\int_{\omegal\setminus\Omega_{\ell-\ell_0}}|\phi_\ell|^p=\int_{\Omega_{\ell_0}}|u_{\ell_0}^{\epsilon_0}|^p
$$ 
and thus we have 
\begin{equation}\label{lp norm of phi}
\int_{\omegal}|\phi_\ell|^p=\int_{\Omega_{\ell_0}}|u_{\ell_0}^{\epsilon_0}|^p+\frac{2\alpha}{p+1}.
\end{equation}
Similarly we have
\begin{equation}\label{final estimate}
  \int_{\omegal}|A\nabla\phi_\ell\cdot\nabla\phi_\ell|^{\frac{p}{2}}
  = \int_{\Omega_{\ell_0}}|A\nabla u_{\ell_0}^{\epsilon_0}\cdot\nabla u_{\ell_0}^{\epsilon_0}|^{\frac{p}{2}}+\int_{\Omega_{\ell-\ell_0}}|A\nabla\phi_\ell\cdot\nabla\phi_\ell|^{\frac{p}{2}}.
 \end{equation}
 Let $S=\Omega_{\ell-\ell_0}\setminus\Omega_{\ell-\ell_0-\alpha}$ and $S^+=(\ell-\ell_0-\alpha, \ell-\ell_0)\times\omega_2$. Using Minkowski inequality and the fact that $\phi_\ell$ is an even function of $x_1$ on $S$. We estimate the above last integral as follows
 \begin{align}\label{annular estimate}
   &
   \int_{\Omega_{\ell-\ell_0}}|A\nabla\phi_\ell\cdot\nabla\phi_\ell|^{\frac{p}{2}}\nonumber\\
   &=
   \int_{S}|a_{11}(X_2)(\partial_{x_1}\phi_{\ell})^2+A_{22}\nabla_{X_2}\phi_{\ell}\cdot\nabla_{X_2}\phi_{\ell}+2(A_{12}\cdot\nabla_{X_2}\phi_{\ell})(\partial_{x_1}\phi_{\ell})|^{\frac{p}{2}}\nonumber\\
  &=
  \frac{1}{\alpha^p}\int_{S}|(A_{22}\nabla W\cdot\nabla W)\;\xi^2+a_{11}(X_2)W^2+2(A_{12}\cdot\nabla W)\xi\;\xi^\prime|^{\frac{p}{2}}\nonumber\\
  &\leq
  \frac{1}{\alpha^p}\bigg\{\bigg(\int_{S}|A_{22}\nabla W\cdot\nabla W|^{\frac{p}{2}}|\xi|^p\bigg)^{\frac{2}{p}}+\bigg(\int_{S}|a_{11}(X_2)W^2+2(A_{12}\cdot\nabla W)\xi\;\xi^\prime|^{\frac{p}{2}}\bigg)^{\frac{2}{p}}\bigg\}^{\frac{p}{2}}\nonumber\\
  &=
  \frac{1}{\alpha^p}(I_1+I_2)^{\frac{p}{2}},
 \end{align}
where
\begin{align}
    I_1
    =\bigg(\int_{S}|A_{22}\nabla W\cdot\nabla W|^{\frac{p}{2}}|\xi|^p\bigg)^{\frac{2}{p}}
    &=
    \bigg(2\int_{S^+}|A_{22}\nabla W\cdot\nabla W|^{\frac{p}{2}}\;\xi^p\bigg)^{\frac{2}{p}}\nonumber\\
    &=
    \bigg(2\mu_1(\omega_2)\int_{\ell-\ell_0-\alpha}^{\ell-\ell_0}(x_1-\ell+\ell_0+\alpha)^p\;dx_1\bigg)^{\frac{2}{p}}\nonumber\\
    &=
    \bigg(\frac{2\alpha^{p+1}\mu_1(\omega_2)}{p+1}\bigg)^{\frac{2}{p}}\nonumber,
\end{align}
and
$I_2=\bigg(\int_{S}|a_{11}(X_2)W^2+2(A_{12}\cdot\nabla W)\xi\;\xi^\prime|^{\frac{p}{2}}\bigg)^{\frac{2}{p}}$,
since $||a_{11}(X_2)W^2+2(A_{12}\cdot\nabla W)\xi\;\xi^\prime||_{\infty}\leq M$ for some $M>0$, then we have 
$I_2\leq C\;\alpha^{\frac{2}{p}}.$\\
 Now plugging the above estimates into \eqref{annular estimate} and then using the elementary inequality which stated in section \ref{dirichlet} we obtain
  \begin{equation}\label{annular domain estimate}
       \int_{\Omega_{\ell-\ell_0}}|A\nabla\phi_\ell\cdot\nabla\phi_\ell|^{\frac{p}{2}}
       \leq
       \frac{1}{\alpha^p}\bigg(\bigg(\frac{2\alpha^{p+1}\mu_1(\omega_2)}{p+1}\bigg)^{\frac{2}{p}}+C\alpha^{\frac{2}{p}}\bigg)^{\frac{p}{2}}
       \leq
       \frac{2\alpha\mu_1(\omega_2)}{p+1}+\frac{C_1}{\alpha^{p-1}}+\frac{C_2}{\alpha}.
  \end{equation}
  Combining \eqref{lp norm of phi}, \eqref{final estimate} and \eqref{annular domain estimate} we get
  
  $$
  \lambda^1_M(\omegal)
  \leq
  \frac{\int_{\omegal}|A\nabla\phi_\ell\cdot\nabla\phi_\ell|^{\frac{p}{2}}}{\int_{\omegal}|\phi_\ell|^p}
  \leq
  \frac{\int_{\Omega_{\ell_0}}|A\nabla u_{\ell_0}^{\epsilon_0}\cdot\nabla u_{\ell_0}^{\epsilon_0}|^{\frac{p}{2}}+\frac{2\alpha\mu_1(\omega_2)}{p+1}+\frac{C_3}{\alpha}}{\int_{\Omega_{\ell_0}}|u_{\ell_0}^{\epsilon_0}|^p+\frac{2\alpha}{p+1}}.
  $$
  
 \noindent Therefore,
  \begin{equation*}
      \lambda^1_M(\omegal)-\mu_1(\omega_2)
      \leq
      \frac{\int_{\Omega_{\ell_0}}|A\nabla u_{\ell_0}^{\epsilon_0}\cdot\nabla u_{\ell_0}^{\epsilon_0}|^{\frac{p}{2}}-\mu_1(\omega_2)\int_{\Omega_{\ell_0}}|u_{\ell_0}^{\epsilon_0}|^p+\frac{C_3}{\alpha}}{\int_{\Omega_{\ell_0}}|u_{\ell_0}^{\epsilon_0}|^p+\frac{2\alpha}{p+1}}.
  \end{equation*}
  By \eqref{ell0 domain estimate} it is clear that for a fixed large enough $\alpha$ such that the RHS of the above is negative and get the desired result.\smallskip\\
  
\noindent\textit{Case 2:} Suppose the condition of the Theorem doesn't hold i.e. $A_{12}\cdot\nabla W=0$ in $\omega_2$. Then $\Lambda$ becomes $\mu_1(\omega_2)$ and by \eqref{lower bound for l to 0} we conclude that $\mu_1(\omega_2)\leq\lambda^1_M(\omegal)\;\forall\ell>0$. Now by choosing $u(x)=W(X_2)$ as a test function in \eqref{mixed eigen value} then we get $\lambda^1_M(\omegal)\leq\mu_1(\omega_2).$ This completes the proof of the theorem.
\end{proof}

\textbf{Acknowledgement:} The second author would like to thank Prof. Itai Shafrir for several discussion on the subject, during his stay in Technion, Israel. We also thank Prof. Itai Shafrir for suggesting us the use of Picone's identity in the proof of Theorem \ref{conv for dirichlet case}. The work of the second author is supported by INSPIRE grant IFA14-MA43 and Matrix grant MTR/2019/000585.

\end{document}